\titlespacing*{\chapter}{0pt}{3.5ex plus 1ex minus .2ex}{2.3ex plus .2ex}
\newtheorem{thm}{Theorem} 
\newtheorem{lemma}{Lemma}
\newtheorem{prop}{Proposition}
\theoremstyle{nonumberplain}
\newtheorem{proof}{Proof}
\theoremstyle{empty}
\newtheorem{refproof}{Proof}
\newcommand{\R}{\mathbb{R}}
\newcommand{\N}{\mathbb{N}}
\newcommand{\Z}{\mathbb{Z}}
\newcommand{\Q}{\mathbb{Q}}
\newcommand{\lcm}{\rm lcm}
\newcommand{\F}{\mathbb{F}}
\newcommand{\NN}{\mathbb{N}}
\newcommand{\FF}{\mathbb{F}}
\newcommand{\ui}{[0,1)}
\newcommand{\xn}{(\bm{x}_n)_{n \geq 0}}
\newcommand{\lc}{_{n\geq 0}}
\newcommand{\bse}{\bm{e}}
\newcommand{\bsx}{\bm{x}}
\newcommand{\vol}{{\rm vol}}
\newcommand{\bsz}{\boldsymbol{z}}
\begin{document}
	
\title{Pair Correlations of Niederreiter and Halton Sequences are not Poissonian}
\author{Roswitha Hofer\footnote{The author is supported by the Austrian Science Fund (FWF), Project F5505-N26, which is a part of the Special Research Program Quasi-Monte Carlo Methods: Theory and Applications.}, Lisa Kaltenböck\footnote{The author is supported by the Austrian Science Fund (FWF), Project F5507-N26, which is a part of the Special Research Program Quasi-Monte Carlo Methods: Theory and Applications.}}
\date{\vspace{-5ex}}
\maketitle

\begin{abstract}
Niederreiter and Halton sequences are two prominent classes of multi-dimensional sequences which are widely used in practice for numerical integration methods because of their excellent distribution qualities. In this paper we show that these sequences---even though they are uniformly distributed---fail to satisfy the stronger property of Poissonian pair correlations. This extends already established results for one-dimensional sequences and confirms a conjecture of Larcher and Stockinger. The proofs rely on a general tool which identifies specific regularities of a sequence to be sufficient for not having Poissonian pair correlations. 
\end{abstract}

\section{Introduction}
Let $\| \cdot \|$ denote the distance to the nearest integer. A sequence $(x_n)\lc$ of real numbers in $\ui$ has Poissonian pair correlations if
$$\frac{1}{N} \#\left\{0 \leq n \neq l \leq N-1 : \|x_n - x_l\| \leq \frac{s}{N}\right\} \rightarrow 2s$$
for every real number $s \geq 0$ as $N \to \infty$.

The investigation of pair correlations of sequences was originally motivated by problems in quantum chaos, see e.g. \cite{AAL18} and the references therein. In the last few years, in particular the case of Poissonian pair correlations has also been studied from a purely mathematical point of view as this property is natural for a sequence of independently chosen random numbers drawn from the uniform distribution. Extensive research recently has been done in terms of metrical theory as well as for concrete sequences. An introduction to this topic and a collection of results is provided by \cite{LS2019}.

For example, it is known that any sequence $(x_n)\lc$ in $\ui$ which has Poissonian pair correlations is also uniformly distributed, i.e.
$$\lim_{N \to \infty} \frac{1}{N} \#\{0 \leq n \leq N -1: x_n \in [a,b)\} = b-a$$
for all $0 \leq a < b \leq 1$ (this was independently proven in \cite{ALP18,GreLar,Stei17}). However, the converse is not true since for many explicit examples of classical low-discrepancy sequences, such as the Kronecker sequence $(\{n\alpha\})\lc$, the van der Corput sequence and certain digital $(t,1)$-sequences, it has been shown that they do not have Poissonian pair correlations (see e.g. \cite{LS2018}).

A generalization of most previously mentioned results to a multi-dimensional setting has recently been established in \cite{HinrEtAl2018}, see also \cite{Stei19} and \cite{Marklof19} for a slightly different approach and a more general analysis of higher dimensional pair correlations. In this work, we refer to the former concept. Therefore, let $\| \cdot \|_\infty$ denote a combination of the supremum-norm of a $d$-dimensional vector $\bm{x} = (x^{(1)}, \dots, x^{(d)}) \in \R^d$ with the distance to the nearest integer function $\|\cdot\|$ defined by 
$$\| \bm{x} \|_\infty := \max(\|x^{(1)}\|, \dots, \|x^{(d)}\|).$$ 
A $d$-dimensional sequence $\xn \in \ui^d$ has Poissonian pair correlations if
\begin{equation}\label{equ:def_d}
\frac{1}{N} \#\left\{0 \leq n \neq l \leq N-1 : \|\bm{x}_n - \bm{x}_l\|_{\infty} \leq \frac{s}{N^{1/d}}\right\} \rightarrow (2s)^d 
\end{equation}
for every real number $s \geq 0$ as $N \to \infty$.

In analogy to the one-dimensional case it could be shown that sequences with this property are uniformly distributed in $\ui^d$ and that the $d$-dimensional Kronecker sequence $(\{n\bm{\alpha}\})\lc$ does not have Poissonian pair correlations for any $\bm{\alpha} \in \R^d$. However, whether the multi-dimensional analogues of other well-distributed one-dimensional point sequences---as Halton or digital $(t,d)$-sequences---have Poissonian pair correlations remained unanswered.

The motivation behind the present paper was to close this gap. Before the presentation of our results, let us briefly review the construction of the considered sequences.

\subsection{Digital $(t,d)$-sequences}
\label{sec:digital_sequences}
A widely used class of low-discrepancy sequences for numerical integration methods are $(t,d)$-sequences that are generated via the digital method:
\\

\textit{
	Let $d \in \NN$, let $\F_q$ be a finite field with $q$ elements and characteristic $p$ and let $\phi: \F_q \to \{0, 1, \dots, q-1\}$ be a bijection satisfying $\phi(0)=0$. Further, let $C^{(1)},\ldots, C^{(d)}\in\FF_q^{\NN\times \NN_0}$, be given generating matrices, where we assume that each column of such a matrix contains only finitely many nonzero entries. We construct a sequence $\xn$, $\bm{x}_n = (x_n^{(1)}, \dots, x_n^{(d)})$, by generating the $j$-th component of the $n$-th point, $x_n^{(j)}$, as follows. We represent $n = n_0 + n_1q + n_2q^2 + \dots$ in base $q$ and set
	$$C^{(j)} \cdot (\phi^{-1}(n_0), \phi^{-1}(n_1), \dots)^\top =: (y^{(j)}_1, y^{(j)}_2, \dots)^\top \in \FF_q^{\NN}$$
	and 
	$$x_n^{(j)} := \sum_{i \geq 1} \frac{\phi(y_i^{(j)})}{q^{i}}.$$
}

The distribution properties of the constructed sequence $\xn$ strongly depend on the generating matrices $C^{(1)},\ldots, C^{(d)}$.

If for every $m \in \NN$ and for all $r_1, \dots, r_d \in \NN_0$ with $r_1 + \dots + r_d = m-t$, $t \in \NN_0$, 
the set of row vectors
$$\{(c^{(j)}_{r,k})_{0 \leq k < m}: j \in \{1, \dots, d\}, r \in \{1, \dots, r_j\}\}$$
is linearly independent over $\F_q$, then $\xn$ is a $(t,d)$-sequence in base $q$, i.e. for all integers $m > t$ and $s \in \NN_0$ the point set $(\bm{x}_n)_{sq^m \leq n < (s+1)q^m}$ has the property that any elementary interval of order $m-t$, that is any interval of the form
$$I(v_1, \dots, v_d) := \prod_{j = 1}^d \Big[\frac{a_j}{q^{v_j}},\frac{a_j+1}{q^{v_j}}\Big)$$
with $v_1 + \dots + v_d = m-t$ and $a_j \in \{0,1, \dots, q^{v_j}-1\}$, 
contains exactly $q^t$ points. For more detailed information on $(t,d)$-sequences and their construction we refer to \cite{DP10,N92} and the references therein.

The more specific notion of $(t, \bse, d)$-sequences, where $t \in \NN_0$ and $\bse = (e_1, \dots, e_d)$, was introduced by Tezuka in \cite{Tez13} and analyzed e.g. in \cite{HoNi13}. For constructing such sequences, for all $r_1, \dots, r_d \in \NN_0$ such that $e_1r_1 + \dots + e_dr_d \leq m-t$, the set of row vectors
$$\{(c^{(j)}_{r,k})_{0 \leq k < t + \sum_{j=1}^{d}e_jr_j}: j \in \{1, \dots, d\}, r \in \{1, \dots, e_jr_j\}\}$$
has to be linearly independent over $\FF_q$.
Then, for all integers $m > t$ and $s \in \NN_0$ the point set $([\bm{x}_n]_{q,m})_{sq^m \leq n < (s+1)q^m}$, where $[\bm{x}_n]_{q,m}$ denotes the coordinate-wise $q$-ary $m$-digit truncation of the point $\bm{x}_n$, satisfies that for any $v_1, \dots, v_d \in \NN_0$ with $e_1v_1 + \dots + e_d v_d \leq m-t$, the elementary interval $I(e_1v_1, \dots, e_dv_d)$ contains exactly $q^{m - (e_1v_1 + \dots + e_d v_d)}$ points.

Note that a $(t,d)$-sequence in base $q$ is identical to a $(t,\bse, d)$-sequence in base $q$ if $\bse = (1, \dots, 1)$. Moreover, it is known that any $(u,\bse,d)$-sequence in base $q$ is a $(t,d)$-sequence in base $q$ with $t = u + \sum_{j = 1}^d (e_j-1)$.

It is a non-trivial task to find or construct matrices satisfying such strict conditions on their rank structure. One famous example was given by Niederreiter \cite{Nie88}.
\\

\textit{	
For a given dimension $d \in \NN$ we choose $q_1(x), \dots, q_d(x) \in \FF_q[x]$ to be monic non-constant pairwise co-prime polynomials over $\FF_q$ of degrees $e_j:=\deg{q_j(x)} \geq 1$ for $j \in \{1, \dots, d\}$. Set $\bse=(e_1, \dots, e_d)$. Now the $i$-th row of the $j$-th generating matrix $C^{(j)}$, denoted by $\rho^{(j)}_i$, is constructed as follows. We choose $s \in \NN$ and $r \in \{0, \dots, e_j-1\}$ such that $i = e_j s - r$, consider the expansion 
$$\frac{x^r}{q_j(x)^s}=\sum_{k \geq 0} a^{(j)}(s,r,k)x^{-k-1} \in \FF_q((x^{-1}))$$ and set 
$\rho^{(j)}_i=(a^{(j)}(s,r,k))_{k\geq 0}$. 
}

It is easy to check that the generating matrices are non-singular upper triangular $(NUT)$ matrices over $\FF_q$. Furthermore, they generate digital $(0,\bse,d)$- and $(t,d)$-sequences over $\FF_q$ with $t = \sum_{j = 1}^{d}(e_j-1)$ (see e.g. \cite{Tez13, HoNi13}).

An alternative column by column construction method for generating matrices was introduced in \cite{hofer}.
\\

\textit{
We choose $d \in \NN$ pairwise co-prime, monic non-constant polynomials $q_1(x), \dots, q_d(x) \in\FF_q[x]$ and denote their degrees, which are all positive, by $e_1,\ldots,e_d$. For $k \in \NN_0$, we construct the $k$-th column of the $j$-th generating matrix $C^{(j)}$, denoted by $\sigma^{(j)}_k = (\sigma^{(j)}_{t,k})_{t \geq 1}$, by using the representation of $x^{k}$ in terms of powers of $q_j(x)$, i.e., $x^k = \sum_{s\geq 0} b_s(x) q_j^s(x)$ with $b_s(x) \in \FF_q[x]$ satisfying $\deg{b_s(x)} < e_j$. This representation can be computed as follows: 
\begin{align*}
x^k &= a_0(x) q_j(x)+b_0(x),\quad \text{ where } a_0(x), b_0(x) \in \FF_q[x] \text{ such that } \deg{b_0(x)} < e_j,\\
a_0(x) &= a_1(x) q_j(x)+b_1(x),\quad \text{ where } a_1(x), b_1(x)\in \FF_q[x] \text{ such that } \deg{b_1(x)} < e_j,\\
& \,\,\, \vdots \,\,\, . 
\end{align*}
Note that there are just finitely many nonzero remainder polynomials $b_s(x)$.
Now we consider the representation of the remainder polynomial $b_s(x)$ in terms of powers of $x$, i.e. $b_s(x) = b_{s,0} + b_{s,1}x + \dots + b_{s,e_j - 1}x^{e_j - 1}$, and set 
$$(\sigma^{(j)}_{e_js+1,k}, \sigma^{(j)}_{e_js+2,k}, \dots, \sigma^{(j)}_{e_js+e_j,k}):= (b_{s,0},b_{s,1}, \dots, b_{s,e_j-1}).$$
}

The matrices $C^{(1)}, \dots, C^{(d)}$ are $NUT$ matrices and generate a $(0, \bse, d)$-sequence in base $q$. (Cf. \cite[proof of Theorem~1]{hofer}).

For both of these methods of constructing generating matrices of digital sequences, we analyze the left hand side of \eqref{equ:def_d} and obtain our first main result.
\begin{thm}
	\label{thm:Niederreiter}
	The digital $(0, \bm{e}, d)$-sequences with generating matrices $C^{(1)}, \dots, C^{(d)}$ obtained via the Niederreiter construction or the alternative column by column approach do not have Poissonian pair correlations.
\end{thm}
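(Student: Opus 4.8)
The plan is to analyze the pair‑correlation counting function of these sequences along a subsequence $N_k\to\infty$ and to show that it misses the value $(2s)^d$ for at least one fixed $s>0$; since Poissonian pair correlations require \eqref{equ:def_d} for every $s\ge 0$, violating it for a single $s$ suffices. As in the known one‑dimensional cases, the right scale is a power of the base: take $N_k=q^{m_k}$ with $m_k$ a multiple of $d\,L$, where $L=\mathrm{lcm}(e_1,\dots,e_d)$, so that $N_k^{1/d}=q^{m_k/d}$ is again a power of $q$. Then the $(0,\bm e,d)$‑property, applied to the elementary cubes $I(m_k/d,\dots,m_k/d)$ — for which $e_jv_j=m_k/d$ and $\sum_j e_jv_j=m_k$ — shows that each of the $N_k$ cubes of side $N_k^{-1/d}$ partitioning $[0,1)^d$ contains exactly one of the points $\bm x_0,\dots,\bm x_{N_k-1}$. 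On the relevant scale the point set is thus a highly structured ``one point per box'' configuration, and the whole question becomes: where in its box does each point sit?

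The second step is to make that placement explicit via the generating matrices. Writing the coordinates of $\bm x_n$ in base $q$ and using that $C^{(1)},\dots,C^{(d)}$ are $NUT$, one sees that for $n<q^{m_k}$ only the first $m_k+O(1)$ digits of each coordinate can be nonzero and that the digit strings are an $\FF_q$‑linear function of the data $\big(\phi^{-1}(n_0),\phi^{-1}(n_1),\dots\big)$. The box of $\bm x_n$ is determined by the first $m_k/d$ digits of each coordinate, i.e.\ by applying an $m_k\times m_k$ matrix over $\FF_q$ to this data; that matrix is invertible — which is precisely the one‑point‑per‑box statement — so the box is a bijective function of $n$. The offset of $\bm x_n$ inside its box is the image of the same data under the remaining rows of the $C^{(j)}$, post‑composed with $\phi$. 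Consequently every coordinate of a difference $\bm x_n-\bm x_l$ lies on a $q^{-(m_k+O(1))}$‑grid and its leading digits are governed, through $\phi$, by a fixed linear function of $\big(\phi^{-1}(n_i)-\phi^{-1}(l_i)\big)_{i\ge 0}$; for the Niederreiter matrices this linear function is read off from the expansions $x^r/q_j(x)^s$, and for the column‑by‑column matrices from the expansions of $x^k$ in powers of $q_j(x)$, so it is explicit in both cases.

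The third step is to turn this rigidity into a quantitative defect. For $0<s<1$, every pair counted in \eqref{equ:def_d} must come from two boxes that are neighbours — index vectors differing by at most $1$ in each coordinate, cyclically — so the count is governed entirely by how the within‑box offsets of neighbouring boxes relate. Because those offsets are a $\phi$‑twist of an affine‑linear image of $\FF_q^{m_k}$, they carry correlations across the $d$ coordinates that genuinely independent, uniformly placed offsets would not, and the aim is to show that for a suitable fixed $s$ the number of neighbouring‑box pairs with $\|\bm x_n-\bm x_l\|_\infty\le s\,N_k^{-1/d}$ is \emph{not} asymptotic to $(2s)^dN_k$: the $\FF_q$‑linear relation tying the offset in the ``linking'' coordinate to the box index pushes this count to the wrong order of magnitude. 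The cleanest way to organize this is to first prove a general lemma — a digit‑regularity condition that is sufficient for a sequence \emph{not} to have Poissonian pair correlations — and then verify the condition for the Niederreiter and the column‑by‑column sequences from the explicit data above.

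I expect the third step to be the main obstacle: controlling how the non‑additive digit bijection $\phi$ interacts with the $\FF_q$‑linear structure of the generating matrices, and doing this uniformly over the two families, whose matrices — though both $NUT$ and both producing $(0,\bm e,d)$‑sequences — differ in their fine band structure. A promising route is to extract from the constructions one combinatorial property of the digit strings of ``consecutive'' points, e.g.\ of a block $\bm x_{sq^a},\dots,\bm x_{sq^a+q^a-1}$ whose members share all but boundedly many digits in every coordinate, that both constructions enjoy and that already obstructs \eqref{equ:def_d}, and to formulate the general lemma around that property.
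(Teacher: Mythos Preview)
Your outline captures the right overall philosophy---exploit the rigid $(0,\bm e,d)$-structure to show the pair-correlation count is off---and your proposed ``general lemma'' is close in spirit to the paper's Proposition~\ref{thm:general_result}. But the three concrete devices that actually make the paper's proof go through are missing, and without them your step~3 remains a hope rather than an argument.

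First, the paper does \emph{not} take $N_k=q^{m}$; it takes $N_m=2q^{m}$. With your choice each elementary cube $I(m/d,\dots,m/d)$ contains exactly one point, so every relevant pair must come from neighbouring boxes, and you would have to control all $3^d-1$ neighbour types simultaneously. By doubling, the paper puts exactly two points $\bm x_n,\bm x_l$ (one with $n<q^m$, one with $q^m\le l<2q^m$) into each cube, and the difference vector $\Delta_{n,l}\in\FF_q^{m+1}$ is \emph{uniquely determined}, independent of the cube: it equals $\phi^{-1}(1)$ times the $(m+1)$-st column of an explicit $NUT$ scrambling matrix $S$ (Lemma~\ref{lem:S}). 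This reduces the whole problem to analysing a single explicit vector, instead of the family of neighbour relations you propose to study.

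Second, even with $\Delta_{n,l}$ in hand, one needs the digit vector $C^{(j)}\Delta_{n,l}$ to have a long run of zeros followed by a single controlled nonzero entry. The paper achieves this by a Frobenius trick: it chooses $m=uv\theta d$ with $u$ a \emph{power of the characteristic} $p$ of $\FF_q$, so that $(f+g)^u=f^u+g^u$ over $\FF_q[x]$ and hence $\prod_{i\neq j}q_i^{uv\theta/e_i}\equiv 1\pmod{q_j^{uv/e_j}}$. This forces, for every $j$, the pattern $(0,\dots,0,\phi^{-1}(1),0,\dots,0,a,\dots)$ with a zero-block of length $uv-1$ after the first nonzero digit. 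Nothing in your outline supplies this sparsity; ``the offset of $\bm x_n$ inside its box is the image of the same data under the remaining rows'' is true but, without the characteristic-$p$ step, those remaining rows are generic and you get no usable concentration of $\|\bm x_n-\bm x_l\|_\infty$ into a narrow band.

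Third, the $\phi$ issue you flag as the main obstacle is handled in the paper not by any structural argument about $\phi$, but by brute force: one restricts to the $\ge M_m/q$ values of $n$ for which the relevant digit $y^{(1)}_{uv\theta+1}$ realises $\max_{\alpha\in\FF_q}|\phi(\alpha+\phi^{-1}(1))-\phi(\alpha)|$, and then the long zero-block from the Frobenius step squeezes $\|\bm x_n-\bm x_l\|_\infty$ into $[w/q^{uv\theta+1}-q^{-uv\theta-uv},\,w/q^{uv\theta+1}+q^{-uv\theta-uv})$. Letting $u\to\infty$ makes this band as thin as one likes while keeping $c=1/q$ fixed, and Proposition~\ref{thm:general_result} finishes. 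Your proposal neither produces such a band nor explains how a neighbouring-box count would contradict \eqref{equ:def_d} for a specific $s$; as written, step~3 is the entire proof and it has not been started.
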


\subsection{Halton Sequences}
Other multi-dimensional point sequences which are of wide interest and which can be seen as the extension of the van der Corput sequence to higher dimensions are Halton sequences \cite{halton}.
\\

\textit{
	Let $d \in \N$, $b_1, \dots, b_d\geq 2$ be pairwise relatively prime integers and for $b\geq 2$ let $\phi_b: \N_0 \to \ui$ be the $b$-adic radical inverse function, defined as 
	$$\phi_b(n) := \frac{n_0}{b} + \frac{n_1}{b^2} + \dots$$
	where $n = n_0 + n_1b + \dots$ with $n_i \in \{0, \dots, b-1\}$ for $i \in \N_0$ is the unique base $b$ representation of $n$. The Halton sequence in bases $b_1, \dots, b_d$ is the sequence $\xn$ in $[0,1)^d$ whose elements are given by 
	$$\bm{x}_n = (\phi_{b_1}(n), \dots, \phi_{b_d}(n)).$$
}

Again, see e.g. \cite{DP10} for more details.
The question whether Halton sequences have Poissonian pair correlations was posed in \cite{HinrEtAl2018} and also stated as Problem~5 in \cite{LS2019}, although it was suggested that this is most likely not the case. It turns out that this conjecture indeed is true.
\begin{thm}
	\label{thm:Halton}
	The Halton sequence $\xn$ in pairwise relatively prime integer bases $b_1, \dots, b_d$, $d \in \NN$, does not have Poissonian pair correlations.
\end{thm}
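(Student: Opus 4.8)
My plan is to adapt the van der Corput argument: it suffices to exhibit one $s>0$ and a subsequence of $N$'s along which the left-hand side of \eqref{equ:def_d} stays below its Poissonian value $(2s)^d$. I would take $N=N(m):=\prod_{j=1}^{d}b_j^{k_j}$ with $k_j:=\lceil m/\log b_j\rceil$, so that $b_j^{k_j}\in[e^m,b_je^m)$ and hence $c_1N^{1/d}\le b_j^{k_j}\le c_2N^{1/d}$ for constants $0<c_1\le c_2$ depending only on $d,b_1,\dots,b_d$; then fix a small $s$ (how small to be pinned down later) and write $\delta:=sN^{-1/d}$ and $P_N(s):=\frac1N\#\{0\le n\ne l\le N-1:\|\bm x_n-\bm x_l\|_\infty\le\delta\}$, noting that $\delta<\tfrac12 b_j^{-k_j}$ for every $j$. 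The target is $\limsup_{m\to\infty}P_{N(m)}(s)<(2s)^d$.

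The first real step is a structural observation. Calling $\prod_{j}[a_jb_j^{-k_j},(a_j+1)b_j^{-k_j})$ a cell, I would note that $\phi_{b_j}(n),\phi_{b_j}(l)$ lie in the same coordinate-$j$ cell precisely when $n\equiv l\pmod{b_j^{k_j}}$ (the first $k_j$ base-$b_j$ digits agree), and that $\delta<\tfrac12 b_j^{-k_j}$ forces $\|\phi_{b_j}(n)-\phi_{b_j}(l)\|\le\delta$ to put them in the same cell or in two cyclically adjacent ones. Since the $b_j^{k_j}$ are pairwise coprime with product $N$, the Chinese Remainder Theorem gives a bijection $l\mapsto(l\bmod b_j^{k_j})_j$ of $\{0,\dots,N-1\}$ onto $\prod_j\ZZ/b_j^{k_j}$; hence each cell carries exactly one of $\bm x_0,\dots,\bm x_{N-1}$, and for $n\ne l$ some coordinate $j$ has $n\not\equiv l\pmod{b_j^{k_j}}$. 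Thus every ordered pair counted in $NP_N(s)$ must \emph{straddle} a coordinate-$j$ grid point $ab_j^{-k_j}$ for at least one $j$ (i.e. $\phi_{b_j}(n),\phi_{b_j}(l)$ lie on opposite sides of, and within $\delta$ of, $ab_j^{-k_j}$ on $\RR/\ZZ$), so $NP_N(s)\le\sum_{j=1}^{d}\#\{(n,l):n\ne l,\ \|\bm x_n-\bm x_l\|_\infty\le\delta,\ (n,l)\text{ straddles a coordinate-}j\text{ grid point}\}$.

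The crux — and the step I expect to be genuinely laborious — is to estimate these $d$ sums and establish $NP_N(s)\le C\,s^{d+1}N+o(N)$ with $C=C(d,b_1,\dots,b_d)$. I would rewrite the straddling condition in base-$b_j$ digits: $\phi_{b_j}(n)\in[ab_j^{-k_j}-\delta,ab_j^{-k_j})$ means the first $k_j$ digits of $n$ encode $a-1$ and $\phi_{b_j}(\lfloor n/b_j^{k_j}\rfloor)\ge 1-\delta b_j^{k_j}$, and since $\delta b_j^{k_j}\le b_j^{-t_j}$ for the integer $t_j$ with $b_j^{t_j}\asymp 1/s$, the digits $k_j,\dots,k_j+t_j-1$ of $n$ must all equal $b_j-1$ — an elementary, net-type constraint that cuts the $n$-range by a factor $\asymp s$ — and symmetrically the ``above'' partner has those digits all $0$. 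Fixing which coordinates straddle and on which sides (boundedly many cases), the admissible $n$ then fill an elementary subset of density $\asymp s^{p}$, $p\ge1$ being the number of straddling coordinates, and the partner $l$ is \emph{uniquely determined} (it occupies the prescribed neighbouring cell). What remains is to show $\bm x_l$ lands within $\delta$ of $\bm x_n$ in all $d$ coordinates for only an $\asymp s^{d}$-fraction of these $n$; in the dominant case $p=1$ this follows because $l-n$ is a fixed nonzero multiple of $N/b_j^{k_j}=\prod_{i\ne j}b_i^{k_i}$, which is coprime to each $b_i$, so the high-order base-$b_i$ digits of $l$ arise from those of $n$ by adding a unit-like multiple and are ``scrambled'', whence a van der Corput discrepancy (Erd\H{o}s--Tur\'an) estimate makes each condition $\|\phi_{b_i}(n)-\phi_{b_i}(l)\|\le\delta$ hold for a proportion $\asymp\delta b_i^{k_i}\asymp s$ of the $n$; the cases $p\ge2$ are smaller by further powers of $s$, and the error terms accumulate to $o(N)$ over the $O(1)$ cases.

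To finish I would simply shrink $s$ further so that $Cs^{d+1}<(2s)^d$; then $\limsup_{m\to\infty}P_{N(m)}(s)\le Cs^{d+1}<(2s)^d$, which disproves \eqref{equ:def_d} for the Halton sequence. (This ``one point per cell of mesh $\asymp N^{-1/d}$, no two distinct points sharing all cell indices, hence too few close pairs'' mechanism is, I expect, precisely the regularity the paper's general tool isolates, and the same scheme should give Theorem~\ref{thm:Niederreiter}: along $N=q^{m}$ a digital $(0,\bse,d)$-sequence tiles its coordinate-elementary cells of volume $\asymp 1/N$ one point each and, by the rank conditions on the generating matrices, no two distinct points agree in every cell index — so the coprimality of the $b_j$ is simply replaced by linear independence over $\FF_q$.) The hard part, as noted, is the third step: reducing ``$\phi_{b_j}$ straddles a grid point'' to clean congruence conditions while correctly handling carries and the wrap-around on $\RR/\ZZ$, and then quantifying the ``scrambling'' sharply enough that the genuinely close straddling pairs form an $o(1)$-fraction of the Poissonian count — which means marrying the CRT structure of the Halton construction to discrepancy bounds for its van der Corput coordinate sequences while keeping every error term $o(N)$.
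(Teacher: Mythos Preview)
Your approach is quite different from the paper's, and in a way that matters. The paper's Proposition~\ref{thm:general_result} does \emph{not} isolate a ``too few close pairs'' mechanism; it goes the other way: it exhibits $\asymp N$ pairs whose mutual distance lies in a \emph{thin shell} $(aN^{-1/d},bN^{-1/d}]$ with $c>(2b)^d-(2a)^d$, i.e.\ too \emph{many} pairs at one specific scale. Concretely, the paper chooses $N_{\bm k}=L+M$ (not your balanced $\prod_jb_j^{k_j}$), takes the explicit shift $M$, and shows via elementary congruences that for a positive fraction of $n$ the vector $\bm x_{n+M}-\bm x_n$ has its first coordinate pinned near $b_1^{-(u\tau_1k_1+1)}$ and all other coordinates strictly smaller; a Minkowski/accumulation-point argument (Lemma~\ref{lem:nalpha_cluster_point}) then synchronises the exponents so that this common distance sits at a fixed multiple of $N_{\bm k}^{-1/d}$ for infinitely many $\bm k$.

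The serious gap in your plan is the ``scrambling'' step. Once you fix a straddling coordinate $j$ and pass to the generic subcase, the partner satisfies $l=n+\Delta$ for a \emph{fixed} integer $\Delta$ with $b_i^{k_i}\mid\Delta$, so $\|\phi_{b_i}(n)-\phi_{b_i}(l)\|=b_i^{-k_i}\|\phi_{b_i}(m)-\phi_{b_i}(m+\Delta')\|$ with $\Delta'=\Delta/b_i^{k_i}$ fixed. But $\phi_{b_i}(m+\Delta')-\phi_{b_i}(m)$ is \emph{not} equidistributed as $m$ varies: it is a deterministic function of the carry pattern in the base-$b_i$ addition $m+\Delta'$ and hence takes only finitely many values. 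No Erd\H os--Tur\'an/discrepancy estimate will manufacture a density $\asymp s$ here; the condition $\|\phi_{b_i}(n)-\phi_{b_i}(l)\|\le\delta$ either fails for essentially all $m$ or holds on a set of \emph{positive, $s$-independent} density (whichever occurs depends on $\Delta'$, hence on the particular $k_1,\dots,k_d$). This torpedoes the claimed bound $P_N(s)\le Cs^{d+1}$: you have not ruled out a positive-density family of $n$ for which the unique neighbouring-cell partner happens to land within $\delta$ in every coordinate. (There is also a smaller issue earlier: ``$l-n$ is a fixed multiple of $N/b_j^{k_j}$'' is only true cell-by-cell, since adjacency of $b_j$-adic cells is governed by digit-\emph{reversal} of the cell index, so $l-n\pmod{b_j^{k_j}}$ genuinely depends on which boundary $n$ abuts.) Ironically, the concentration of $\phi_{b_i}(n)-\phi_{b_i}(n+\Delta)$ on a few values is precisely the phenomenon the paper harnesses---in the overcounting direction---by engineering $\Delta=M$ so that the single dominant value sits where they want it.
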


Of course, it typically is expected that multi-dimensional versions of sequences have similar qualities as their one-dimensional analogues. However, it should be mentioned that an exceptional behaviour of Halton sequences has been observed for the instance of the $L_p$-discrepancy for $p<\infty$. Recently, Levin proved that higher-dimensional Halton sequences have optimal order of $L_p$-discrepancy \cite{levin}, even though the one-dimensional van der Corput sequence does not satisfy optimal $L_p$-discrepancy bounds (see e.g. \cite{pill}). 
\\

The rest of the paper is organized as follows. The key ingredient for the proofs of Theorem \ref{thm:Niederreiter} and Theorem \ref{thm:Halton} is a general tool stated as Proposition \ref{thm:general_result} at the beginning of the next section. It identifies regularity conditions of sequences which are sufficient for failing Poissonian pair correlations. Verifying these conditions for both, Niederreiter and Halton sequences, is not trivial and therefore takes the majority of Section \ref{sec:proofs}. Finally, in Section \ref{sec:discussion} we give an outlook to future research tasks and discuss a problem in algebraic number theory and Diophantine approximation that occurred during the investigation of Halton sequences.

\section{Proofs}\label{sec:proofs}
The property of Poissonian pair correlations can be seen as local quality criterion for a sequence $\xn$ to be uniformly distributed. Therefore, one might suggest that deterministically generated sequences which show a certain degree of regularity do not enjoy this feature. In fact, this is the statement of the following proposition, which serves as one of our key tools for the proofs of Theorems \ref{thm:Niederreiter} and \ref{thm:Halton}.

\begin{prop}
	\label{thm:general_result}
	Let $\xn$ be a sequence in $\ui^d$.
	If there exists a strictly increasing sequence of positive integers $(N_k)_{k \in \N}$ such that $(\bm{x}_{n})_{0 \leq n < N_k}$ fulfills 
	\begin{equation}
	\label{eq:estimate}
	\#\Big\{0 \leq n \neq l \leq N_k-1 : \|\bm{x}_n - \bm{x}_l\|_{\infty} \in \Big(\frac{a}{N_k^{1/d}},\frac{b}{N_k^{1/d}}\Big]\Big\} \geq c N_k
	\end{equation}
	for all $k$ larger than some index $k_0$ and where $a,b,c > 0$ are real constants which satisfy
	\begin{equation}
	\label{eq:assumption_a_b_c}
	c > (2b)^d - (2a)^d>0,
	\end{equation}
	then $\xn$ does not have Poissonian pair correlations.
\end{prop}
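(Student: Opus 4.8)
The plan is to argue by contradiction, exploiting the fact that the number of pairs falling into the annulus in \eqref{eq:estimate} is exactly the difference of two of the counting quantities appearing in the definition \eqref{equ:def_d} of Poissonian pair correlations. There is no deep ingredient here; the proposition is really a bookkeeping observation turned into a criterion.

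So I would suppose, contrary to the assertion, that $\xn$ does have Poissonian pair correlations, and for $N \in \N$ and $s \ge 0$ abbreviate
\[
R_N(s) := \#\Big\{0 \le n \ne l \le N-1 : \|\bm{x}_n - \bm{x}_l\|_\infty \le \tfrac{s}{N^{1/d}}\Big\},
\]
so that by assumption $R_N(s)/N \to (2s)^d$ as $N \to \infty$ for every fixed $s \ge 0$, and in particular along the subsequence $(N_k)_{k \in \N}$. The first step is the elementary observation that, for each fixed $k$, a pair $(n,l)$ is counted on the left-hand side of \eqref{eq:estimate} precisely when $\|\bm{x}_n - \bm{x}_l\|_\infty \le b/N_k^{1/d}$ holds but $\|\bm{x}_n - \bm{x}_l\|_\infty \le a/N_k^{1/d}$ does not; hence the left-hand side of \eqref{eq:estimate} equals $R_{N_k}(b) - R_{N_k}(a)$, and the hypothesis \eqref{eq:estimate} rewrites as
\[
R_{N_k}(b) - R_{N_k}(a) \ge c\,N_k \qquad \text{for all } k > k_0 .
\]

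Dividing by $N_k$ and letting $k \to \infty$, the left-hand side tends to $(2b)^d - (2a)^d$ by the assumed Poissonian pair correlation property, while the right-hand side stays equal to $c$; this yields $(2b)^d - (2a)^d \ge c$, contradicting the strict inequality $c > (2b)^d - (2a)^d$ of \eqref{eq:assumption_a_b_c}. I would then conclude that $\xn$ cannot have Poissonian pair correlations.

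There is essentially no hard step: the only points needing a little care are the set-theoretic identity that matches the annulus count with $R_{N_k}(b) - R_{N_k}(a)$ (which relies on the interval in \eqref{eq:estimate} being half-open of the form $(a/N_k^{1/d}, b/N_k^{1/d}]$), and the remark that convergence of $R_N(s)/N$ over all of $\N$ forces convergence along the subsequence $(N_k)$. The strictness in \eqref{eq:assumption_a_b_c} is exactly what is needed to turn the inequality ``$\ge$'' into a genuine contradiction; note also that $(2b)^d - (2a)^d > 0$ together with $a, b > 0$ forces $b > a$, so the annulus in \eqref{eq:estimate} is non-degenerate in the first place.
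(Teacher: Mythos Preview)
Your argument is correct and follows essentially the same route as the paper: assume Poissonian pair correlations, decompose the count for $s=b$ as the count for $s=a$ plus the annulus count in \eqref{eq:estimate}, and derive $(2b)^d-(2a)^d\ge c$ in contradiction to \eqref{eq:assumption_a_b_c}. The paper carries this out with explicit $\varepsilon$-bounds rather than passing to the limit directly, but the structure is identical.
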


\begin{proof}
	To begin with, assume that $\xn$ has Poissonian pair correlations. We use this property for $s = b$ and obtain
	$$\frac{1}{N_k} \#\Big\{0 \leq n \neq l \leq N_k-1 : \|\bm{x}_n - \bm{x}_l\|_{\infty} \leq \frac{b}{N_k^{1/d}}\Big\} \rightarrow (2b)^d $$
	as $N_k \to \infty$. It holds that
	\begin{align*}
	\#\Big\{0 &\leq n \neq l \leq N_k-1 : \|\bm{x}_n - \bm{x}_l\|_{\infty} \leq \frac{b}{N_k^{1/d}}\Big\} \\
	&= \#\Big\{0 \leq n \neq l \leq N_k-1 : \|\bm{x}_n - \bm{x}_l\|_{\infty} \leq \frac{a}{N_k^{1/d}}\Big\} \\
	&\quad + \#\Big\{0 \leq n \neq l \leq N_k-1 : \|\bm{x}_n - \bm{x}_l\|_{\infty} \in \Big(\frac{a}{N_k^{1/d}},\frac{b}{N_k^{1/d}}\Big]\Big\} \\
	&=: A + B.
	\end{align*}
	Therefore, for any $\varepsilon_1 > 0$ there exists an index $k(\varepsilon_1)$ such that for all $k > k(\varepsilon_1)$ we have
	$$\frac{A}{N_k} + \frac{B}{N_k} \leq (2b)^d + \varepsilon_1.$$
	For sufficiently large $N_k$ we can use the assumptions \eqref{eq:estimate} and obtain
	$$\frac{A}{N_k} \leq (2b)^d + \varepsilon_1 - \frac{B}{N_k} \leq (2b)^d + \varepsilon_1 - c.$$
	
	Now consider $A/N_k$ which tends to $(2a)^d$ as $N_k \to \infty$ by the property of Poissonian pair correlations for $s = a$.
	Again this implies that for any $\varepsilon_2 > 0$ there is an index $k(\varepsilon_2)$ such that for all $k > k(\varepsilon_2)$ it holds that 
	$$\frac{A}{N_k} \geq (2a)^d - \varepsilon_2.$$
	By assumption \eqref{eq:assumption_a_b_c}, there exists $\kappa > 0$ such that 
	$$c = (2b)^d - (2a)^d + \kappa.$$
	However, if $\varepsilon_1$ and $\varepsilon_2$ are chosen such that $\varepsilon_1 + \varepsilon_2 < \kappa$ and provided that $N_k$ is sufficiently large we have 
	$$(2a)^d - \varepsilon_2\leq \frac{A}{N_k} \leq (2b)^d + \varepsilon_1 - c$$
	and
	$$c\leq (2b)^d-(2a)^d+ \varepsilon_1+\varepsilon_2<(2b)^d - (2a)^d + \kappa=c,$$
	which yields the desired contradiction to our assumption that  $\xn$ has Poissonian pair correlations.
\end{proof}

In the light of Proposition \ref{thm:general_result}, the key ingredient for proving that Niederreiter and Halton sequences do not have Poissonian pair correlations therefore is to find enough pairs of points, for which the distance between those points can be suitably well calculated and lies in a rather small interval.

\subsection{Application to digital sequences}
For the proof of Theorem \ref{thm:Niederreiter} we need some preliminary results.
\begin{lemma}(\cite[Prop.1]{FauTez})\label{lem:2}
	Let $C^{(1)},\ldots,C^{(d)}$ be the generating matrices of a digital $(t,\bse,d)$-sequence over $\FF_q$ and $S$ be a $NUT$ matrix in $\FF_q^{\NN_0\times\NN_0}$. Then $C^{(1)}S,\ldots,C^{(d)}S$ are generating matrices of a digital $(t,\bse,d)$-sequence over $\FF_q$. 
\end{lemma}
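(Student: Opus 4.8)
The plan is to reduce everything to the characterization of digital $(t,\bse,d)$-sequences by linear independence of truncated row systems of the generating matrices (the condition recalled just before Theorem~\ref{thm:Niederreiter}), and then to show that right-multiplication by a $NUT$ matrix $S$ preserves every one of these conditions; this is essentially the argument of \cite[Prop.~1]{FauTez}.

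First I would record what right-multiplication by $S$ does to the matrices. If $\rho_i^{(j)}$ denotes the $i$-th row of $C^{(j)}$, then the $i$-th row of $C^{(j)}S$ is $\rho_i^{(j)}S$. Since $S$ is upper triangular, each of its columns is finitely supported, so every column of $C^{(j)}S$ is a finite $\FF_q$-linear combination of columns of $C^{(j)}$ and hence has only finitely many nonzero entries; thus $C^{(1)}S,\dots,C^{(d)}S$ are again admissible generating matrices. The key point, again a consequence of upper-triangularity, is the truncation identity: for any width $w$, the truncation of $\rho_i^{(j)}S$ to its first $w$ coordinates equals the truncation of $\rho_i^{(j)}$ to its first $w$ coordinates, multiplied on the right by the top-left $w\times w$ submatrix $S_w$ of $S$. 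As $S$ is $NUT$, so is $S_w$, and therefore $v\mapsto vS_w$ is an invertible, in particular injective, linear map on $\FF_q^{\,w}$.

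Now take any admissible tuple $(r_1,\dots,r_d)\in\NN_0^d$ with $e_1r_1+\dots+e_dr_d\le m-t$ and let $w$ be the corresponding truncation width. By the truncation identity, the truncated row system attached to $C^{(1)}S,\dots,C^{(d)}S$ is exactly the image of the truncated row system attached to $C^{(1)},\dots,C^{(d)}$ under the invertible map $v\mapsto vS_w$. An injective linear map preserves linear independence, so one system is linearly independent over $\FF_q$ if and only if the other is. Since this holds for all admissible tuples and all $m$, the matrices $C^{(1)}S,\dots,C^{(d)}S$ satisfy precisely the same defining conditions as $C^{(1)},\dots,C^{(d)}$, and hence generate a digital $(t,\bse,d)$-sequence over $\FF_q$.

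I do not anticipate a genuine obstacle: the content is bookkeeping, and the only place where the hypothesis "$S$ is $NUT$" (rather than merely invertible) is really used is the truncation identity—upper-triangularity is exactly what makes the first $w$ columns of $C^{(j)}S$ depend only on the first $w$ columns of $C^{(j)}$. For readers who prefer to avoid the linear-independence characterization, the same conclusion follows from the combinatorial definition: on digit vectors, replacing $C^{(j)}$ by $C^{(j)}S$ corresponds to the bijection $\vec n\mapsto S\vec n$ of finitely supported vectors in $\FF_q^{\NN_0}$, and since both $S$ and $S^{-1}$ are upper triangular this bijection maps $\{0,\dots,q^m-1\}$ onto itself and carries each block $sq^m\le n<(s+1)q^m$ onto another such block; consequently the truncated point set $([\bm{x}_n]_{q,m})_{sq^m\le n<(s+1)q^m}$ of the new sequence coincides, as a multiset, with such a point set of the original sequence, and therefore inherits all the required elementary-interval counts.
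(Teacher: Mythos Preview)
The paper does not supply its own proof of this lemma: it is stated with a citation to \cite[Prop.~1]{FauTez} and used as a black box. Your argument is correct and is exactly the standard one behind that citation---right-multiplication by a $NUT$ matrix $S$ acts on truncated row systems via the invertible map $v\mapsto vS_w$, hence preserves every linear-independence condition defining a digital $(t,\bse,d)$-sequence---so there is nothing to compare beyond noting that you have written out what the paper only references. The additional combinatorial reformulation you give (that $\vec n\mapsto S\vec n$ permutes each block $sq^m\le n<(s+1)q^m$ onto another such block) is also correct and is a nice alternative viewpoint.
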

In the following the quantity $L_f$ denotes the maximal row length considering the first $f$ rows of all generating matrices. More precisely, taking the matrix consisting of the first $f$ rows of each of the generating matrices, $L_f-1$ is the index of the last non-zero column (or $\infty$, if none exists). 

\begin{lemma}\label{lem:S}
	Let $C^{(1)},\ldots,C^{(d)}$ be the generating matrices associated to the distinct monic non-constant pairwise co-prime polynomials $q_1(x),\ldots,q_d(x)$ with degrees $e_1,\ldots,e_d$ using one of the two constructions given in Section \ref{sec:digital_sequences}. We set $v:=\lcm(e_1,\ldots,e_d)$ and define the matrix $S\in\FF_q^{\NN_0\times\NN_0}$ as follows. 
	For $k\in\NN_0$, the $k$-th column $\sigma_k$ of $S$, is given by $\sigma_k = (b_0, b_1, \dots, b_{k-1}, 0, \dots)^\top$ where the $b_n$ are the coefficients of the following monic polynomial of degree $k$, 
	$$p_k(x)=x^{r_1}\prod_{i=1}^d q_i(x)^{(s_i+s_{i+1}+ \cdots +s_d)v/e_i} = \sum_{n\geq 0}b_nx^{n}.$$
	Here the $s_i$ and $r_i$ are defined as follows
	\begin{align*}
	k &=  dv s_d+r_d,& \, r_d \in\{0,\ldots,vd-1 \} \\
	r_d &= (d-1)vs_{d-1}+r_{d-1},& \, r_{d-1}\in\{0,\ldots,v(d-1)-1\} \\
	&\,\,\,\vdots & \vdots \,\,\, \\
	r_2&= v s_1+r_1,& \, r_1\in\{0,\ldots,v-1\}.
	\end{align*}
	Then the matrices $C^{(1)}S, \dots, C^{(d)}S$ generate a digital $(0,\bse,d)$-sequence and satisfy $L_f\leq d f$ provided that $v|f$. 
\end{lemma}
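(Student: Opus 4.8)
The plan is to check first that $S$ is a $NUT$ matrix (so that Lemma~\ref{lem:2} applies and yields the $(0,\bse,d)$-property), and then to establish the bound $L_f\le df$ via a divisibility statement for the polynomials $p_k$. For the first part, the essential point is that $p_k$ is monic of degree exactly $k$: summing exponents in $p_k(x)=x^{r_1}\prod_{i=1}^d q_i(x)^{(s_i+\dots+s_d)v/e_i}$ gives $\deg p_k=r_1+v\sum_{i=1}^d(s_i+\dots+s_d)=r_1+vs_1+2vs_2+\dots+dvs_d$, and adding the defining equations $r_1+vs_1=r_2,\ r_2+2vs_2=r_3,\ \dots,\ r_d+dvs_d=k$ shows this equals $k$. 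Hence the coefficient of $x^k$ in $p_k$ is $1$ and all higher ones vanish, so $\sigma_k$ carries its leading $1$ on the diagonal with zeros above, i.e.\ $S$ is $NUT$; since $C^{(1)},\dots,C^{(d)}$ already generate a digital $(0,\bse,d)$-sequence (Section~\ref{sec:digital_sequences}), Lemma~\ref{lem:2} immediately yields the same for $C^{(1)}S,\dots,C^{(d)}S$.

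For the bound $L_f\le df$, assume $v\mid f$, so $e_j\mid f$ for every $j$. The $k$-th column of $C^{(j)}S$ equals $C^{(j)}$ applied to the coefficient vector of $p_k$, and by linearity together with the definition of the generating matrices this column is the ``$q_j$-adic digit string'' of $p_k$: for the column-by-column construction, writing $p_k=\sum_{s\ge 0}c_s(x)q_j(x)^s$ with $\deg c_s<e_j$ and $c_s(x)=\sum_{l=0}^{e_j-1}c_{s,l}x^l$, the entry in row $e_js+l+1$ is $c_{s,l}$; for the Niederreiter construction, the entry in row $i=e_js-r$ (with $0\le r<e_j$, $s\ge 1$) is the coefficient of $x^{-1}$ in the Laurent expansion of $x^rp_k(x)/q_j(x)^s$. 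In both cases only $q_j$-power blocks of exponent at most $f/e_j$ enter the first $f$ rows, so these entries all vanish once $q_j(x)^{f/e_j}\mid p_k(x)$: in the first case $c_s=0$ for $s<f/e_j$, in the second $x^rp_k(x)/q_j(x)^s$ is a polynomial for $s\le f/e_j$ and hence has vanishing $x^{-1}$-coefficient. It therefore suffices to show $q_j(x)^{f/e_j}\mid p_k(x)$ for all $j$ whenever $k\ge df$: the multiplicity of $q_j$ in $p_k$ is at least $(s_j+\dots+s_d)v/e_j\ge s_d\,v/e_j$, and $k=dvs_d+r_d$ with $0\le r_d<dv$ and $k\ge df$ force $s_d=\lfloor k/(dv)\rfloor\ge\lfloor f/v\rfloor=f/v$ (using $v\mid f$), so the multiplicity is at least $f/e_j$. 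Hence, for every $j$, every column of $C^{(j)}S$ of index $\ge df$ vanishes in its first $f$ rows, which is precisely $L_f\le df$.

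The step I expect to be most delicate is the translation in the second paragraph — unwinding how $C^{(j)}$ acts on the coefficient vector of a polynomial so as to identify the column with the $q_j$-adic encoding of $p_k$, and doing this correctly for both constructions (digit blocks versus Laurent coefficients of $x^rp_k(x)/q_j(x)^s$); it is here that the hypothesis $v\mid f$ is genuinely needed, to make the first $f$ rows align with complete $q_j$-power blocks up through exponent $f/e_j$. By contrast, the degree identity and the estimate $s_d\ge f/v$ are straightforward once the decomposition of $k$ is telescoped.
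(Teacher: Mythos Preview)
Your proof is correct and follows the same line as the paper's own (very terse) argument: the paper simply cites \cite{HoPi12} for the Niederreiter construction and, for the column-by-column construction, points to ``the linearity of the construction algorithm and the fact that $(1,x,x^2,\ldots,x^k)$ as well as $(1,p_1(x),p_2(x),\ldots,p_k(x))$ form a base of $\{p\in\FF_q[x]:\deg p\le k\}$''. Your telescoping computation $\deg p_k=r_1+vs_1+2vs_2+\cdots+dvs_d=k$, the identification of the $k$-th column of $C^{(j)}S$ with the $q_j$-adic encoding of $p_k$, and the divisibility estimate $q_j^{f/e_j}\mid p_k$ for $k\ge df$ via $s_d=\lfloor k/(dv)\rfloor\ge f/v$ are exactly the details the paper suppresses.
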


\begin{proof}
For the Niederreiter construction see \cite{HoPi12}, for the column by column construction note the linearity of the construction algorithm and the fact that $(1,x,x^2,\ldots,x^k)$ as well as $(1,p_1(x),p_2(x),\ldots,p_k(x))$ form a base of $\{p\in\FF_q[x]: \deg(p)\leq k\}$.
\end{proof}

\begin{refproof}[Proof of Theorem \ref{thm:Niederreiter}.]
We again may assume $d\geq 2$ as the one-dimensional case was treated already in \cite{LS2018}. Let $q_j(x)$, $j \in \{1, \dots, d\}$ be monic non-constant pairwise co-prime polynomials over $\FF_q$ of degrees $e_j \geq 1$ and let $C^{(j)}$ denote the corresponding generating matrices constructed via the Niederreiter or the alternative column by column approach. Moreover, let $m$ be a multiple of $v = \lcm(e_1, \dots, e_d)$ times $d$, i.e. $m = kvd$ with $k \in \NN$, let $M_m = q^m$ and $N_m = 2q^m$.

The sequence $(\bsx_n)\lc$ constructed via the digital method is a $(0,\bse, d)$-sequence, therefore any elementary interval with volume $q^{-m}$ of the form
$$I(vk, \dots, vk) = \prod_{j=1}^d\left[\frac{a_j}{q^{vk}},\frac{a_j+1}{q^{vk}}\right)$$ 
where $0 \leq a_j < q^{vk}$, contains exactly one point $\bsx_n$ with $n \in \{0, 1, \dots, M_m-1\}$ and one point $\bsx_l$ with $l \in \{M_m, M_m +1, \dots, N_m -1\}$. The idea of the proof then is to find infinitely many suitable values of $m$ such that the distances between the elements $\bsx_n$ and $\bsx_l$ are similar for many $n$ and $l$, respectively, in order to apply Proposition \ref{thm:general_result}.

To begin with, for arbitrary $n = n_0 + n_1q + \dots$ and $l = l_0 + l_1q + \dots$ we define
$$\Delta_{n,l} := (\phi^{-1}(l_0), \phi^{-1}(l_1), \dots, \phi^{-1}(l_{m}))^\top - (\phi^{-1}(n_0), \phi^{-1}(n_1), \dots, \phi^{-1}(n_{m}))^\top \in \FF_q^{m+1}.$$
Note that $n_m=0$ and $l_m=1$. 

The elements $\bsx_n$ and $\bsx_l$ lie in the same elementary interval $I(vk, \dots, vk)$ if
\begin{equation}
\label{eq:last_column_D}
D_{m\times (m+1)}\Delta_{n,l} = (0, \dots, 0)^\top,
\end{equation}
where $D_{m\times (m+1)} \in \FF_q^{m \times (m+1)}$ is the $(m \times (m+1))$-matrix whose rows consist of the rows of each upper left $(kv \times (m+1))$-submatrix of $C^{(j)}$, $j \in \{1, \dots, d\}$. Note that $D_{m\times (m+1)}$ has rank $m$.

Furthermore, let $S_{m+1}$ be the upper left $((m+1) \times (m+1))$-submatrix of $S$ defined in Lemma \ref{lem:S}. Since $S_{m+1}$ is regular we can rewrite
\begin{align*}
D_{m\times (m+1)} \Delta_{n,l} = D_{m\times (m+1)} S_{m+1} S^{-1}_{m+1}\Delta_{n,l}.
\end{align*}
Since $0 \leq n < M_m$, $M_m \leq l < N_m$ and both, $S_{m+1}$ and $S^{-1}_{m+1}$ are $NUT$ matrices with 1s in the diagonal, we have
$$S^{-1}_{m+1}\Delta_{n,l} = \begin{pmatrix}\bm{d}\\\phi^{-1}(1)\end{pmatrix}$$
with $\bm{d} \in \FF_q^m$.
Moreover, from Lemma \ref{lem:S} with $f = kv$ it follows that the last column of the product
$D_{m\times (m+1)} S_{m+1}$
consists of zeros exclusively. From \eqref{eq:last_column_D} it therefore follows that $\bm{d}$ is the zero vector in $\FF_q^m$, i.e.
$$S^{-1}_{m+1}\Delta_{n,l} = \begin{pmatrix} \bm 0 \\ \phi^{-1}(1) \end{pmatrix},$$
or equivalently after multiplying with $S_{m+1}$ from the left, 
$\Delta_{n,l}$ equals $\phi^{-1}(1)$ times the $(m+1)$-st column of $S_{m+1}$, which is determined by the representation of the polynomial
$$p_m(x)=\prod_{i=1}^dq_i(x)^{vk/e_i}$$
in terms of powers of $x$.

We now have to find specific choices of $m$ or $k$, respectively, such that we obtain special $\Delta_{n,l}$ in order to apply Proposition \ref{thm:general_result}. Therefore, let
$$\tau_i:=\min \{r \in \NN: q_i^{rv/e_i}(x) \equiv 1 \pmod{q_j^{v/e_j}(x)} \text{ for every } j \neq i\}.$$
Then we use the characteristic $p$ of the finite field $\FF_q$ and the fact that for all $k \in \NN$ and $1 \leq l < p^k$ we have $\binom{p^k}{l}\equiv 0 \pmod{p}$. So whenever $u$ is a power of the characteristic of $\FF_q$ we have for all $f(x), g(x) \in \FF_q[x]$ that 
$$(f(x)+g(x))^u=f^u(x)+g^u(x).$$
If we set $\theta:=\lcm(\tau_1 ,\ldots,\tau_d)$, we therefore have
$$q_i^{\theta v/e_i}(x)\equiv 1 \pmod{q_j^{v/e_j}(x)}$$
and 
\begin{equation}
\label{eq:prod_congruence}
\prod_{\substack{i = 1 \\ i \neq j}}^{d} q_i^{uv\theta /e_i}(x)\equiv 1 \pmod{q_j^{uv/e_j}(x)}.
\end{equation}
Then let $m = uv\theta d$ and consider $C^{(j)}_{(m+1) \times (m+1)}\Delta_{n,l}$.
\begin{enumerate}
	\item If the matrix $C^{(j)}$ is constructed via the Niederreiter approach, then the $k$-th entry of $C^{(j)}_{(m+1) \times (m+1)}\Delta_{n,l}$ is the coefficient of $x^{-1}$ in the Laurent series expansion of 
	$$\phi^{-1}(1) \frac{x^r}{q_j^s(x)} \prod_{i = 1}^{d}q_i^{uv\theta/e_i}(x),$$
	with $k = e_js - r$ and $r \in \{0,1, \dots, e_j-1\}$.
	For $s \leq uv\theta/e_j$ and any admissible value of $r$, the expression above is a polynomial and therefore the coefficient of  $x^{-1}$ in its Laurent series expansion is 0. For $uv\theta/e_j < s \leq uv\theta/e_j + uv/e_j$ we use \eqref{eq:prod_congruence} to get
	$$\frac{x^r}{q_j^{s}(x)}\prod_{\substack{i = 1}}^{d}q_i^{uv\theta/e_i}(x) = x^rb(x) + \frac{x^r}{q_j^{s-uv\theta/e_j}(x)},$$
	for some polynomial $b(x) \in \FF_q[x]$. Remember that $x^rq_j^{-(s-uv\theta/e_j)}(x)$ exactly determines row $e_j(s-uv\theta/e_j)-r$ of $C^{(j)}$ and that the coefficient of $x^{-1}$ of this expression is the entry in the first column. Since $C^{(j)}$ is a $NUT$ matrix with 1s in the diagonal we obtain
	$$C^{(j)}_{m+1 \times m+1}\Delta_{n,l} = (\underbrace{0, \dots, 0}_{uv\theta}, \phi^{-1}(1),\underbrace{0, \dots, 0}_{uv-1}, a, \dots)^\top,$$
	with $a \in \FF_q$.
	
	\item In case that the matrix $C^{(j)}$ is constructed via the column by column approach, the entries of $C^{(j)}_{(m+1) \times (m+1)}\Delta_{n,l}$ are given by the coefficients of the representation of $\phi^{-1}(1)p_m(x)$ in terms of powers of $q_j(x)$. Using \eqref{eq:prod_congruence} we get
	\begin{align*}
	p_m(x) &= \prod_{i = 1}^{d} q_i^{uv\theta/e_i}(x) \\
	&= q_j^{uv\theta/e_j}(x) \Big(\prod_{\substack{i = 1 \\ i \neq j}}^d q_i^{uv\theta/e_i}(x)\Big) \\
	&= q_j^{uv\theta/e_j}(x) \Big(1 + b(x) q_j^{uv/e_j}(x)\Big).
	\end{align*}
	for some polynomial $b(x) \in \FF_q[x]$.
	Thus, here we also have
	$$C^{(j)}_{(m+1) \times (m+1)}\Delta_{n,l} = (\underbrace{0, \dots, 0}_{uv\theta}, \phi^{-1}(1),\underbrace{0, \dots, 0}_{uv-1}, a, \dots)^\top,$$
	with $a \in \FF_q$.
\end{enumerate}
Assume now that the $(uv\theta +1)$-st entry $y^{(1)}_{uv\theta+1}$ of $C^{(1)} \cdot (\phi^{-1}(n_0), \phi^{-1}(n_1), \dots)^\top$ fulfills
$$|\phi(y^{(1)}_{uv\theta + 1} + \phi^{-1}(1)) - \phi(y^{(1)}_{uv\theta + 1})| = \max\{|\phi(\alpha + \phi^{-1}(1)) - \phi(\alpha)|: \text{ for } \alpha \in \FF_q\} =: w\geq 1,$$
which is the case for at least $M_m/q$ many values of $n \in \{0, 1, \dots, M_m-1\}$. Then, for such $n$ it holds that
$$\|\bsx_n - \bsx_l\|_\infty \in \Big[\frac{w}{q^{uv\theta +1}} - \frac{1}{q^{uv\theta + uv}}, \frac{w}{q^{uv\theta +1}} + \frac{1}{q^{uv\theta + uv}} \Big).$$
Finally, let $\varepsilon > 0$ and set 
$$a = 2^{1/d}\frac{w}{q} - \varepsilon, \qquad b = 2^{1/d}\frac{w}{q} + \varepsilon, \qquad c = \frac{1}{q}.$$
Thus, for $N_m = 2q^{m}$ with $m = uv\theta d$, where $u$ is a power of the characteristic of $\FF_q$, we have
$$\#\Big\{0 \leq n \neq l \leq N_m-1 : \|\bm{x}_n - \bm{x}_l\|_{\infty} \in \Big(\frac{a}{N_m^{1/d}},\frac{b}{N_m^{1/d}}\Big]\Big\} \geq c N_m$$
provided that $u$ is chosen large enough. 
However, since $\varepsilon$ can be chosen such that
$$ (2b)^d - (2a)^d = 2^d( (2^{1/d}\frac{w}{q} + \varepsilon)^d - (2^{1/d}\frac{w}{q} - \varepsilon)^d ) < \frac{1}{q} = c,$$
the assumptions of Proposition \ref{thm:general_result} are fulfilled and the considered sequences therefore do not have Poissonian pair correlations.
\end{refproof}

\subsection{Application to Halton sequences}
In order to be able to also apply Proposition \ref{thm:general_result} to Halton sequences, we again need a preliminary result, formulated as Lemma \ref{lem:nalpha_cluster_point} below. 

However, this lemma makes use of \textit{Minkowski's Theorem} (see \cite{minkowski}) that states that if  $C \subseteq \R^d$ is a convex set that is symmetric about the origin (i.e., $x \in C$ if and only if $-x \in C$) and with $\vol(C) > 2^d m$, then there are at least $m$ different points $\bsz_1, \dots, \bsz_m$ such that $\pm \bsz_1, \dots, \pm \bsz_m \in C \cap \Z^d \setminus \{0\}$.

\begin{lemma}
	\label{lem:nalpha_cluster_point}
	Let $d \in \N$ and $\alpha_1, \dots, \alpha_d$ be irrational. Then the sequence $(\{n \bm{\alpha}\})\lc$ in $\ui^d$ with $\{n \bm{\alpha}\} = (\{n\alpha_1\}, \dots, \{ n\alpha_d\})$ has an accumulation point in
	$$D:= \{(\delta_1, \dots, \delta_d): \delta_j \in \{0,1\}, j \in \{1, \dots, d\}\}.$$
\end{lemma}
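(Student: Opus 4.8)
The plan is to produce, for every $\varepsilon>0$, infinitely many positive integers $n$ with $\max_{1\le j\le d}\|n\alpha_j\|\le\varepsilon$, and then to extract from these a subsequence along which $(\{n\bm{\alpha}\})$ converges to one of the $2^d$ corners of the unit cube. Note that one cannot in general hope to reach the specific corner $\bm 0$: if, say, $\alpha_2\equiv-\alpha_1$, then $\{n\alpha_2\}=1-\{n\alpha_1\}$, so the two coordinates are never simultaneously close to $0$ -- this is precisely why the statement only claims accumulation \emph{somewhere} in $D$, and a pigeonhole step over the corners will handle it.

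For the first part I would invoke Minkowski's Theorem in dimension $d+1$. Fix $0<\varepsilon<1/2$ and, for a parameter $T>0$, consider
\[
C_T:=\big\{(z_0,z_1,\dots,z_d)\in\R^{d+1}: |z_0|\le T,\ |\alpha_j z_0-z_j|\le\varepsilon\ \text{for } j=1,\dots,d\big\}.
\]
This set is convex and symmetric about the origin, and since it is the image of the box $[-T,T]\times[-\varepsilon,\varepsilon]^d$ under a linear map of determinant $\pm 1$, one has $\vol(C_T)=2^{d+1}\varepsilon^d T$. Given $m\in\N$, choosing $T>m\varepsilon^{-d}$ makes $\vol(C_T)>2^{d+1}m$, so Minkowski's Theorem supplies $m$ distinct points $\bsz_1,\dots,\bsz_m$ with $\pm\bsz_i\in C_T\cap\Z^{d+1}\setminus\{\bm 0\}$. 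Replacing $\bsz_i$ by $-\bsz_i$ if necessary we may assume its first coordinate $n_i:=(\bsz_i)_0$ is nonnegative; if $n_i=0$ then $|(\bsz_i)_j|\le\varepsilon<1$ forces $\bsz_i=\bm 0$, a contradiction, so $n_i\ge 1$. Moreover $\|n_i\alpha_j\|\le|\alpha_j n_i-(\bsz_i)_j|\le\varepsilon$ for every $j$, and because $\varepsilon<1/2$ any two of these points sharing the same first coordinate would have identical (integer) later coordinates and hence coincide; thus the $n_i$ are $m$ distinct integers in $\{1,\dots,\lfloor T\rfloor\}$. Letting $m\to\infty$ (with $T$ chosen accordingly) shows that $A_\varepsilon:=\{n\ge 1:\max_j\|n\alpha_j\|\le\varepsilon\}$ is infinite.

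To conclude, pick $\varepsilon_k\downarrow 0$ with $\varepsilon_1<1/2$. Since $A_{\varepsilon_1}\supseteq A_{\varepsilon_2}\supseteq\cdots$ are all infinite, I can choose $n^{(1)}<n^{(2)}<\cdots$ with $n^{(k)}\in A_{\varepsilon_k}$, so $\max_j\|n^{(k)}\alpha_j\|\le\varepsilon_k\to 0$. For each $k$ record the pattern $\sigma^{(k)}\in\{0,1\}^d$ with $\sigma^{(k)}_j=0$ if $\{n^{(k)}\alpha_j\}<1/2$ and $\sigma^{(k)}_j=1$ otherwise; this is well defined since $\{n^{(k)}\alpha_j\}$ lies within $\varepsilon_k<1/2$ of exactly one of $0$ and $1$. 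By the pigeonhole principle some $\delta\in\{0,1\}^d$ equals $\sigma^{(k)}$ for infinitely many $k$, and along that subsequence $\{n^{(k)}\alpha_j\}\to 0$ when $\delta_j=0$ while $\{n^{(k)}\alpha_j\}\to 1$ when $\delta_j=1$. Hence $(\{n^{(k)}\bm{\alpha}\})$ converges to $\delta\in D$, so $\delta$ is an accumulation point of $(\{n\bm{\alpha}\})\lc$, proving the lemma.

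I expect the only genuinely delicate point to be the middle step. The body $C_T$ sits exactly at the Minkowski threshold when $m=1$, so the argument really needs the many-points version of the theorem together with the two observations that the lattice points it produces have nonzero and pairwise distinct first coordinates; without these one could only conclude that $A_\varepsilon$ is nonempty rather than infinite, which is not enough to get an accumulation point. The volume computation and the final pigeonhole over corners are routine.
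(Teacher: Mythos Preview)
Your proof is correct and follows essentially the same approach as the paper: both apply the many-points version of Minkowski's Theorem to the same convex body $C_T\subset\R^{d+1}$ to produce infinitely many $n$ with $\{n\alpha_j\}$ near $0$ or $1$ for every $j$. Your write-up is in fact more complete than the paper's, which stops after noting the $z_i^{(0)}$ are distinct and leaves implicit both the check that $z_i^{(0)}\neq 0$ and the final pigeonhole over the $2^d$ corners that pins down a specific accumulation point in $D$.
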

\begin{proof}
For $N \in \N$ and arbitrary $\varepsilon_j > 0$, $j \in \{1, \dots, d\}$, define $C_N \in \R^{d+1}$,
	$$C_N:=\{(x_0,x_1, \dots, x_d) \in \R^{d+1}: |\alpha_j x_0 - x_j| \leq \varepsilon_j, j \in \{1, \dots, d\}, |x_0| \leq N\}.$$
	The set $C$ is convex and symmetric about the origin with 
	$$\text{vol}(C) = 2^{d+1} N \prod_{j = 1}^d\varepsilon_j.$$
	Therefore, if $N > m / (\prod_{j = 1}^d\varepsilon_j)$, we have $\text{vol}(C) > 2^{d+1} m$ and, by Minkowski's Theorem, there exist $m$ different elements $\bsz_i = (z_i^{(0)}, z_i^{(1)}, \dots, z_i^{(d)})$, $i \in \{1, \dots, m\}$ with $\bsz_i \in C \cap \Z^{d+1} \setminus \{0\}$ and $z_i^{(0)} \geq 0$. Moreover, for those elements it holds that $|\alpha_j z_i^{(0)} - z_i^{(j)}|\leq \varepsilon_j$, thus $\{\alpha_j z_i^{(0)}\} \in (0,\varepsilon_j] \cup [1-\varepsilon_j,1)$ for all $j \in \{1, \dots, d\}$. Note that, if $\varepsilon_j$ are chosen small enough the integers $z_i^{(0)}$ will be distinct. 
\end{proof}

\begin{refproof}[Proof of Theorem \ref{thm:Halton}.]
For $d=1$ we have to consider the van der Corput sequence for which it is well-known that it does not have Poissonian pair correlations. Hence we assume $d\geq 2$ in the following. 
Let $b_1, \dots, b_d$ be pairwise relatively prime integers and let $\xn$ denote the Halton sequence in bases $b_1, \dots, b_d$. Without loss of generality we assume $b_1 < b_j$ for all $j \in \{2, \dots, d\}$.

Let $u \in \N\setminus\{1\}$ and define
\begin{align*}
P_1 &:= \prod_{j = 2}^{d}b_j^2, \qquad 
P_i := b_1^u\left( \prod_{\substack{j = 2 \\ j \neq i}}^{d}b_j^2 \right), \\
\tau_1 &:= \min\{1 \leq l \leq P_1: b_1^{ul} \equiv 1 \pmod{P_1}\}, \\
\tau_i &:= \min\{1 \leq l \leq P_i: b_i^{2l} \equiv 1 \pmod{P_i}\}
\end{align*}
for all $i \in \{2, \dots, d\}$. Such $\tau_1, \tau_i$ exist as $\gcd(P_1,b_1)=\gcd(P_i,b_i)=1$ and $d\geq 2$. 

Similar as in the proof of Theorem \ref{thm:Niederreiter} we define for $\bm{k} = (k_1, \dots, k_d)\in\N_0^d$ numbers $N_{\bm{k}}\in\N$ and corresponding subintervals
$$I := I(u\tau_1k_1, 2\tau_2k_2, \dots, 2\tau_dk_d) =\left[\frac{a_1}{b_1^{u\tau_1k_1}},\frac{a_1+1}{b_1^{u\tau_1k_1}}\right)\times\prod_{j=2}^d\left[\frac{a_j}{b_j^{2\tau_jk_j}},\frac{a_j+1}{b_j^{2\tau_jk_j}}\right),$$
where $0 \leq a_1 < b_1^{u\tau_1k_1}$ and $0\leq a_j<b_j^{2\tau_jk_j}$ for $j \in \{2, \dots, d\}$, and study the distances between the points $\bm{x}_n$ that lie in the same subinterval $I$. 

Now let
\begin{align*}
	M &= M(\bm{k}) := b_1^{u\tau_1k_1} \left(\prod_{j = 2}^{d} b_j^{2\tau_j k_j}\right), \\
	L &= L(\bm{k}) := b_1^{u\tau_1k_1+1} \left(\prod_{j = 2}^{d} b_j^{2\tau_j k_j+1}\right).
\end{align*}
By a special regularity of the sequence, which is an easy consequence of the Chinese Remainder Theorem, we have that exactly $\prod_{j = 1}^{d}b_j$ points of the first $L$ points and exactly one point of the subsequent $M$ points of the sequence lie in $I$. Moreover, $\bm{x}_{n+M}\in I$ if and only if $\bm{x}_{n}\in I$. 

We set $N_{\bm{k}}:=L+M$ and study $\|\bm{x}_n - \bm{x}_{n + M}\|_\infty$ for $0\leq n<L$. 

By $(n)_{b_j}$ we denote the digit representation of $n$ in base $b_j$, i.e. for $n = n_0 + n_1 b_j + n_2 b_j^2 + \dots$ we have $(n)_{b_j} = (n_0, n_1, n_2, \dots)$. Note that obviously $b_1^{u\tau_1k_1} |M$ and  $b_j^{2\tau_jk_j}|M$. By the choice of $\tau_1$ and $\tau_j$ we have
$$b_1^{u\tau_1} \equiv 1 \pmod{b_j^2}$$
and also for $i \neq j$, 
$$\qquad b_i^{2\tau_i}\equiv 1 \pmod{b_j^{2}} \qquad \text{ and } \qquad b_i^{2\tau_i}\equiv 1 \pmod{b_1^{u}}.$$
Therefore, 
$$\prod_{i = 2}^d b_i^{2\tau_ik_i} \equiv 1 \pmod{b_1^u} \qquad \text{ and } \qquad b_1^{u\tau_1k_1}\prod_{i=2,i\neq j}^db_i^{2\tau_ik_i}\equiv 1 \pmod{b_j^{2}}.$$
Hence, 
\begin{equation}
	\label{eq:Mdigits}
	\begin{split}
	(M)_{b_1} &= (\underbrace{0,\dots, 0}_{u\tau_1k_1}, 1, \underbrace{0, \dots, 0}_{u -1}, m_{u\tau_1 k_1 + u}, \dots), \\
	(M)_{b_j} &= (\underbrace{0,\dots, 0}_{2\tau_j k_j}, 1, 0, m_{2\tau_j k_j + 2}, \dots).
	\end{split}
\end{equation}	

Now consider $\|\bm{x}_n - \bm{x}_{n + M}\|_\infty = \sup_{j \in \{1, \dots, d\}} \|x^{(j)}_n - x^{(j)}_{n + M}\|$. If for $(n)_{b_1}$ it holds that if $n_{u\tau_1 k_1} \neq b_1 -1$ then by \eqref{eq:Mdigits} the first $u \tau_1 k_1 + u$ entries except of $(u \tau_1 k_1+1)$-st entry of $(n+M)_{b_1}$ and $(n)_{b_1}$ coincide. As $\sum_{i=m+1}^\infty \frac{b_1-1}{b_1^{i}}=\frac{1}{b_1^m}$
we have in the case where $n_{u \tau_1 k_1} \neq b_1-1$,
\begin{align*}
	\|x^{(1)}_n - x^{(1)}_{n + M}\| 
	&\in \Big(\frac{1}{b_1^{u \tau_1k_1 +1}} - \frac{1}{b_1^{u \tau_1 k_1 + u}}, \frac{1}{b_1^{u \tau_1k_1 +1}} + \frac{1}{b_1^{u \tau_1 k_1 + u}}\Big).
\end{align*}
Similarly, for the other coordinates $j \in \{2, \dots, d\}$ we obtain in the case where in $(n)_{b_j}$ we have $n_{2 \tau_j k_j} \neq b_j-1$,
\begin{align*}
\|x^{(j)}_n - x^{(j)}_{n + M}\|
&\in \Big(\frac{1}{b_j^{2 \tau_jk_j +1}} - \frac{1}{b_j^{2 \tau_j k_j + 2}}, \frac{1}{b_j^{2 \tau_jk_j +1}} + \frac{1}{b_j^{2 \tau_j k_j + 2}}\Big).
\end{align*}
\ \\

Next, we want to find constants $\xi_j\geq 1$, $j \in \{2, \dots, d\}$, such that
\begin{equation}
\label{eq:estimate_fraction} 
\xi_j \leq \frac{b_j^{2\tau_jk_j +1}}{b_1^{u\tau_1k_1 +1}} \leq \xi_j f(u)
\end{equation}
with 
$$f(u) := \left(\frac{1+b_1^{1-u}}{1-b_1^{1-u}}\right)^{\frac{d}{d-1}}$$
is simultaneously fulfilled for infinitely many $\bm{k} = (k_1, k_2, \dots, k_d) \in \N_0^d$ and thus also for infinitely many $N_{\bm{k}} = M + L$. Therefore, we define $\beta_1 := b_1^{u \tau_1}$ and $\beta_j := b_j^{2\tau_j}$ for $j \in \{2, \dots, d\}$. The inequalities in \eqref{eq:estimate_fraction} are then equivalent to
\begin{equation}
\label{eq:xi}
\log_{\beta_j}\Big(\xi_j\frac{b_1}{b_j}\Big) + k_1 \log_{\beta_j}(\beta_1) \leq k_j \leq \log_{\beta_j}\Big(\xi_j f(u)\frac{b_1}{b_j}\Big) + k_1 \log_{\beta_j}(\beta_1).
\end{equation}
Moreover, we consider the sequence $(\{n \bm{\alpha}\})\lc \in \ui^{d-1}$ with $\{n \bm{\alpha} \} = (\{n \alpha_2\}, \dots, \{n \alpha_d\})$ and $\alpha_j = \log_{\beta_j}(\beta_1) \in \R \setminus \mathbb{Q}$.
Let now $(\delta_2, \dots, \delta_d) \in \{0,1\}^{d-1}$ denote an accumulation point of this sequence which exists by Lemma \ref{lem:nalpha_cluster_point}.

We want to distinguish two cases:
If $\delta_j = 0$ we set 
$$\xi_j := \frac{b_j}{b_1} \frac{1}{f(u)} \qquad \text{ and } \qquad k_j := \lfloor k_1 \log_{\beta_j}(\beta_1)\rfloor.$$
Note that $\xi_j > 1$ if $u$ is large enough. The inequalities \eqref{eq:xi} are then equivalent to 
$$\{k_1 \log_{\beta_j}(\beta_1)\} - \log_{\beta_j}(f(u)) \leq 0 \leq \{k_1 \log_{\beta_j}(\beta_1)\},$$
which is fulfilled if 
\begin{equation}
\label{eq:delta_0}
\{k_1 \log_{\beta_j}(\beta_1)\} \in \big[0, \log_{\beta_j}(f(u))\big].
\end{equation}

If $\delta_j = 1$ we set 
$$\xi_j := \frac{b_j}{b_1} \qquad \text{ and } \qquad k_j := \lfloor k_1 \log_{\beta_j}(\beta_1)\rfloor +1.$$
Again, $\xi_j > 1$ and \eqref{eq:xi} is equivalent to 
$$\{k_1 \log_{\beta_j}(\beta_1)\} \leq 1 \leq \{k_1 \log_{\beta_j}(\beta_1)\} + \log_{\beta_j}(f(u)),$$
which is fulfilled if 
\begin{equation}
\label{eq:delta_1}
\{k_1 \log_{\beta_j}(\beta_1)\} \in \big[1-\log_{\beta_j}(f(u)),1\big].
\end{equation}

By the fact that $(\delta_2, \dots, \delta_d)$ is an accumulation point of $(\{n \bm{\alpha}\})\lc$ with $\bm{\alpha} = (\log_{\beta_2}(\beta_1), \dots, \log_{\beta_d}(\beta_1))$, conditions \eqref{eq:delta_0} and \eqref{eq:delta_1}, respectively, are fulfilled simultaneously for each $j = 2, \dots, d$ for infinitely many $k_1$. Since $k_j \geq 0$ for all $j \in \{2, \dots, d\}$,  we know that there are also infinitely many $N_{\bm{k}}$ such that \eqref{eq:estimate_fraction} is fulfilled.

We can now use this important estimate to deduce that 
\begin{align*}
\frac{1}{b_1^{u \tau_1 k_1 + 1}} - \frac{1}{b_1^{u \tau_1 k_1 + u}} > \frac{1}{b_j^{2 \tau_j k_j + 1}} + \frac{1}{b_j^{2 \tau_j k_j + 2}}
\end{align*}
for all $j \in \{2, \dots, d\}$ and $u$ large enough.
This can be seen since 
\begin{align*}
\frac{b_j^{2 \tau_j k_j + 1}}{b_1^{u \tau_1 k_1 + 1}}\Big(1 - \frac{1}{b_1^{u -1}}\Big) &\geq \xi_j \Big(1 - \frac{1}{b_1^{u -1}}\Big) \\
&> \frac{b_j}{b_1} \frac{1}{f(u)}\Big(1 - \frac{1}{b_1^{u-1}}\Big) \\
&> \Big(1 + \frac{1}{b_j}\Big),
\end{align*}
where in the last step we used that $(1-1/b_1^{u-1})/f(u) \to 1$ as $u \to \infty$, and $b_j > b_1$.
Therefore, if in $(n)_{b_1}$ we have that $n_{u \tau k_1} \neq b_1 -1$ and in $(n)_{b_j}$, $j \in \{2, \dots, d\}$ we have that $n_{2\tau_j k_j} \neq b_j-1$,
$$\|\bm{x}_n - \bm{x}_{n+M}\|_\infty = \|x_n^{(1)} - x_{n+M}^{(1)}\| \in \Big(\frac{1-b_1^{1-u}}{b_1^{u \tau_1 k_1 +1}},\frac{1+b_1^{1-u}}{b_1^{u \tau_1 k_1 +1}}\Big).$$

As next step, we want to establish suitable bounds for $\|\bm{x}_n - \bm{x}_{n+M}\|_\infty $ in order to be able to apply Proposition~\ref{thm:general_result}, i.e. we want to show that there exist $a$ and $b$ such that
\begin{equation*}
\frac{a}{(L+M)^{1/d}} \leq \frac{1-b_1^{1-u}}{b_1^{u \tau_1 k_1 +1}} < \frac{1+b_1^{1-u}}{b_1^{u \tau_1 k_1 +1}} \leq \frac{b}{(L+M)^{1/d}},
\end{equation*}
which is equivalent to
\begin{equation}
\label{eq:a_b_Halton}
a^d \leq \frac{(1-b_1^{1-u})^d (L+M)}{(b_1^{u \tau_1 k_1 +1})^d} < \frac{(1+b_1^{1-u})^d (L+M)}{(b_1^{u \tau_1 k_1 +1})^d} \leq b^d.
\end{equation}
Note that 
$$L+M = b_1^{u \tau_1 k_1 +1}\left(\prod_{j = 2}^{d} b_j^{2\tau_j k_j+1}\right)\underbrace{\left(1 + \prod_{j = 1}^{d} b_j^{-1}\right)}_{:=\gamma^d}=L\gamma^d.$$
Using the estimate \eqref{eq:estimate_fraction} we find that \eqref{eq:a_b_Halton} is fulfilled if we choose 
\begin{align*}
a^d &:= (1 - b_1^{1-u})^d \left(\prod_{j = 2}^d\xi_j\right)\gamma^d, \\
b^d &:= \frac{(1 + b_1^{1-u})^{2d}}{(1 - b_1^{1-u})^d} \left(\prod_{j = 2}^d\xi_j\right)\gamma^d.
\end{align*}

Hence we have shown that
$$\|\bm{x}_n - \bm{x}_{n+M}\|_\infty \in \Big(\frac{a}{(L+M)^{1/d}}, \frac{b}{(L+M)^{1/d}}\Big)$$
for $n \in \{0, \dots, L -1\}$ whenever in $(n)_{b_1}$ we have $n_{u \tau k_1} \neq b_1 -1$ and in $(n)_{b_j}$, $j \in \{2, \dots, d\}$ we have $n_{2\tau_j k_j} \neq b_j-1$. Since this is the case for exactly $\big(\prod_{j = 1}^{d} \frac{b_j-1}{b_j}\big)L$ values of $n$ and each pair has to be counted twice in the pair correlation function, we obtain
\begin{align*}
\#\Big\{0 \leq n &\neq l \leq L+M-1: \|\bm{x}_n - \bm{x}_l\|_\infty \in \Big(\frac{a}{(L+M)^{1/d}}, \frac{b}{(L+M)^{1/d}}\Big]\Big\} \\
&\geq \#\Big\{0 \leq n \leq L-1: \|\bm{x}_n - \bm{x}_{n+M}\|_\infty \in \Big(\frac{a}{(L+M)^{1/d}}, \frac{b}{(L+M)^{1/d}}\Big]\Big\} \\
&\geq 2 \Big(\prod_{j = 1}^{d}\frac{b_j-1}{b_j}\Big)\frac{L}{L+M} (L+M) \\
&= 2 \Big(\prod_{j = 1}^{d}\frac{b_j-1}{b_j}\Big) \frac{1}{\gamma^d}(L +M)=:c\cdot (L+M).
\end{align*}

In order to apply Proposition \ref{thm:general_result} it therefore has to hold that
\begin{align}
\label{eq:condition_on_c}
(2b)^d - (2a)^d < 2 \Big(\prod_{j = 1}^{d}\frac{b_j-1}{b_j}\Big) \frac{1}{\gamma^d}=c.
\end{align}
Using the definition of $a$ and $b$ and the fact that $\xi_j \leq b_j /b_1$ we obtain
\begin{align*}
(2b)^d - (2a)^d &= 2^d \left(\prod_{j = 2}^d\xi_j\right) \gamma^d \Big(\frac{(1+b_1^{1-u})^{2d}}{(1-b_1^{1-u})^d} - (1- b_1^{1-u})^d\Big) \\
&\leq 2^d \left(\prod_{j = 2}^d\frac{b_j}{b_1}\right) \gamma^d \underbrace{\Big(\frac{(1+b_1^{1-u})^{2d} - (1- b_1^{1-u})^{2d}}{(1-b_1^{1-u})^d}\Big)}_{\text{tends to 0 for } u \to \infty}.
\end{align*}
Thus, if $u$ is chosen large enough, condition \eqref{eq:condition_on_c} is true and the Halton sequence in bases $b_1, \dots, b_d$ does not have Poissonian pair correlations.
\end{refproof}

\section{Discussion and Further Research}\label{sec:discussion}
Although Theorem \ref{thm:Niederreiter} of the present paper deals with a very prominent class of $(t,s)$-sequences, as a consequence of this result, of course a further research question is, whether other classes of $(t,s)$-sequences, as for example generalized Niederreiter sequences \cite{Tez93}, Niederreiter--Xing sequences \cite{NieXin} or even (digital) $(t,s)$-sequences in general, have the property of Poissonian pair correlation or not. \\

Furthermore, we would like to note an interesting relation of our method of proof to a conjecture in algebraic and transcendental number theory. During the search for a proof of Theorem \ref{thm:Halton} we faced the problem to simultaneously satisfy the inequalities
\eqref{eq:xi} with $\xi_j\geq 1$ for infinitely many $(k_1,k_2,\ldots,k_d)\in\NN_0^d$ to make sure that 
\begin{align*}
\frac{1}{b_1^{u \tau_1 k_1 + 1}} - \frac{1}{b_1^{u \tau_1 k_1 + u}} > \frac{1}{b_j^{2 \tau_j k_j + 1}} + \frac{1}{b_j^{2 \tau_j k_j + 2}}
\end{align*}
for all $j \in \{2, \dots, d\}$ and $u$ large enough. 

Note that if $1,\log_{\beta_2}\beta_1,\ldots,\log_{\beta_d}\beta_1$ were linearly independent over $\Q$ then the sequence $(\{n(\log_{\beta_2}\beta_1,\ldots,\log_{\beta_d}\beta_1)\})_{n\geq 0}\in[0,1)^{d-1}$ would be uniformly distributed in $[0,1)^{d-1}$. Such a statement would considerably shorten the proof of Theorem \ref{thm:Halton}. Unfortunately, it is not known whether for example the three numbers $1/\log 2,1/\log 3,1/\log 5$ are linearly independent over $\Q$ or not. The algebraic independence of the  logarithm of the prime numbers would be one consequence of the so-called Schanuel's conjecture in algebraic and transcendental number theory. We refer the interested reader to \cite{waldschmidt} for more details on this conjecture and its related problems.  

\section*{Acknowledgments}
We would like to thank Michel Waldschmidt for his e-mail correspondence concerning Schanuel's conjecture and the reciprocal of the logarithm of prime numbers.

\textbf{Author’s Addresses:} \\ 
Roswitha Hofer and Lisa Kaltenböck, Institut für Finanzmathematik und Angewandte Zahlentheorie, Johannes Kepler Universität Linz, Altenbergerstraße 69, A-4040 Linz, Austria. \\
Email: \url{roswitha.hofer@jku.at}, \url{lisa.kaltenboeck@jku.at}

\end{document}